\documentclass[10pt, twocolumn, conference]{IEEEtran}

\makeatletter
\def\ps@headings{%
\def\@oddhead{\mbox{}\scriptsize\rightmark \hfil \thepage}%
\def\@evenhead{\scriptsize\thepage \hfil \leftmark\mbox{}}%
\def\@oddfoot{}%
\def\@evenfoot{}}
\makeatother
\pagestyle{headings}

\usepackage{amsmath, mathrsfs,amsfonts}
\usepackage{amsfonts}
\usepackage{amssymb}
\usepackage{acronym}
\usepackage{graphicx}
\usepackage{cite}
\usepackage{algorithmic}
\usepackage{algorithm2e}
\usepackage{array}
\usepackage{psfrag}
\usepackage{subfigure}

\def\ignore#1{}





\newtheorem{theorem}{Theorem}
\newtheorem{lemma}{Lemma}
\newtheorem{corollary}{Corollary}

               {\begin{list}{}{\leftmargin#1\rightmargin#2\topsep#3}\item{}}%
               {\end{list}}

\IEEEoverridecommandlockouts
\begin{document}
\title{Robustness of Interdependent Networks: The case of communication networks and the power grid}

\author{ Marzieh Parandehgheibi and Eytan Modiano\\
\normalsize{Laboratory for Information and Decision Systems}\\
\normalsize{Massachusetts Institute of Technology}\\
\normalsize{Cambridge, MA 02139}}

\maketitle \thispagestyle{headings}

\begin{abstract}
In this paper, we study the robustness of interdependent networks, in which the state of one network depends on the state of the other network and vice versa. In particular, we focus on the interdependency between the power grid and communication networks, where the grid depends on communications for its control, and the communication network depends on the grid for power.  A real-world example is the Italian blackout of 2003, when a small failure in the power grid cascaded between the two networks and led to a massive blackout. In this paper, we study the minimum number of node failures needed to cause total blackout (i.e., all nodes in both networks to fail).  In the case of unidirectional interdependency between the networks we show that the problem is NP-hard, and develop heuristics to find a near-optimal solution. On the other hand, we show that in the case of bidirectional interdependency this problem can be solved in polynomial time.  We believe that this new interdependency model gives rise to important, yet unexplored, robust network design problems for interdependent networked infrastructures.
\end{abstract}

\section{Introduction}
Nowadays infrastructure networks are interdependent in such a way that the failure of an element in one network may cascade to another network and cause the failure of dependent elements. This failure procedure can cascade multiple times between two or more interdependent networks and result in catastrophic widespread failures. A particular example is the interdependency between the power grid and communication networks; where the 2003 Italian blackout, affecting the lives of fifty-five million people, was the result of such interdependency \cite{Rosato}. While reliability in networks has been studied extensively, the focus of most efforts has been on single networks in isolation. However, coupled networks exhibit very different behavior than isolated networks due to the cascading failure effects. In this paper, we develop a simple model for the interdependency between the networks, and analyze their robustness.

In order to enhance the reliability of the power grid, it is essential to have a smarter grid that detects autonomously faults and allows self-healing \cite{MassoudAmin}. The control of today's power grid relies on a supervisory control and data acquisition (SCADA) system. This SCADA system does not have the capability to collect real-time data from the entire grid and analyze it efficiently to make control decisions. It is expected that future smart grid will need a wide-area measurement system (WAMS) which uses new technologies such as Phasor Measurement Units (PMUs) and Sensor and Actuator Networks (SANETs) to collect synchronous data, send it to control centers for making online control decisions, and apply changes to the grid using devices such as actuators. This complex control network increases the dependency of the power grid on communication networks and gives rise to the necessity of analyzing and designing robust cyber-physical networks \cite{VincentPoor}.

There have been a few attempts at introducing and modeling interdependent infrastructure networks. Rinaldi \textit{et al.} describe interdependencies among major infrastructure networks including the power grid and communication networks. In particular, they show that all infrastructure networks depend on information delivered by the communication network for monitoring and controlling their subsystems (i.e. SCADA). At the same time, they show that communication networks depend on the physical output of other networks; for example, power from the electric grid for switches or water from the water infrastructure for cooling systems. These interdependencies are referred to as cyber and physical, respectively \cite{Rinaldi}.

Later, Rosato \textit{et al.} studied the Italian blackout of 2003 which affected the lives of fifty-five million people. They explain that this blackout was the result of a cascade of failures between the power grid and the communication network. In fact, due to failures in the power grid, some of the switches in the communication network lost their power and failed. Subsequently, due to failures in communication network, some of the substations in the power grid lost their control, and failed. They also demonstrate that failures inside the power grid lead to failures in the communication network using simulations on the real data of the Italian network \cite{Rosato}. Although the papers by Rinaldi and Rosato explain the concept of interdependency between networks, neither presents a mathematical model for investigating the behavior of interdependent networks. Moreover, there has been some work on the interdependency within layered communication infrastructure, e.g. between fiber optical networks and IP networks \cite{Kayi,Parandehgheibi}; but, they did not consider interdependency between different infrastructures.

In 2009, Buldyrev \textit{et al.} presented a model for analyzing the robustness of interdependent random networks. They generate two disjoint random graphs and define a one-to-one dependency between every pair of nodes in these graphs. Using techniques from percolation theory, they show that random failures cascade through the graphs, and investigate the existence of a giant component \cite{Nature}. Later, Parshani \textit{et al.} showed that reducing the dependency between the networks makes them more robust to random failures \cite{Parshani}. A discussion of follow-up works on Buldyrev's model can be found in \cite{Gao}.

However, real networks such as the power grid and the communication infrastructure are not random. These networks have known topologies; thus, it is essential to understand the impact of failure cascades in such networks in order to design robust interdependent infrastructures. To the best of our knowledge, this paper is the first attempt to model the interdependency between the networks with known topologies. The focus of this work is on the interdependency between power grid and communication networks; however, it can be extended to other networks with cyber-physical interdependency.

In Section \ref{Model_sec}, we present the network model and define a new metric for assessing the robustness of networks. In Section \ref{STAR}, we study the interdependency between networks with star topologies. In Section \ref{discussion}, we evaluate the robustness of a real network and discuss future research directions, and finally, conclude in Section \ref{conclusion}.

\section{Cyber-Physical Interdependency Model}\label{Model_sec}
We start with a simple model for the power grid and the supporting Control and Communication Network (CCN). The power grid consists of generators $G$ and substations $S$ which are connected with power lines. Similarly, the CCN consists of control centers $C$ and routers $R$ which are connected with communication lines. Every router receives power from at least one substation and every substation sends data and receives control signals from at least one router. Figure \ref{Model} shows the interdependency model between the networks.

\begin{figure}[ht]
\centering
\includegraphics[scale=0.33, angle=-90]{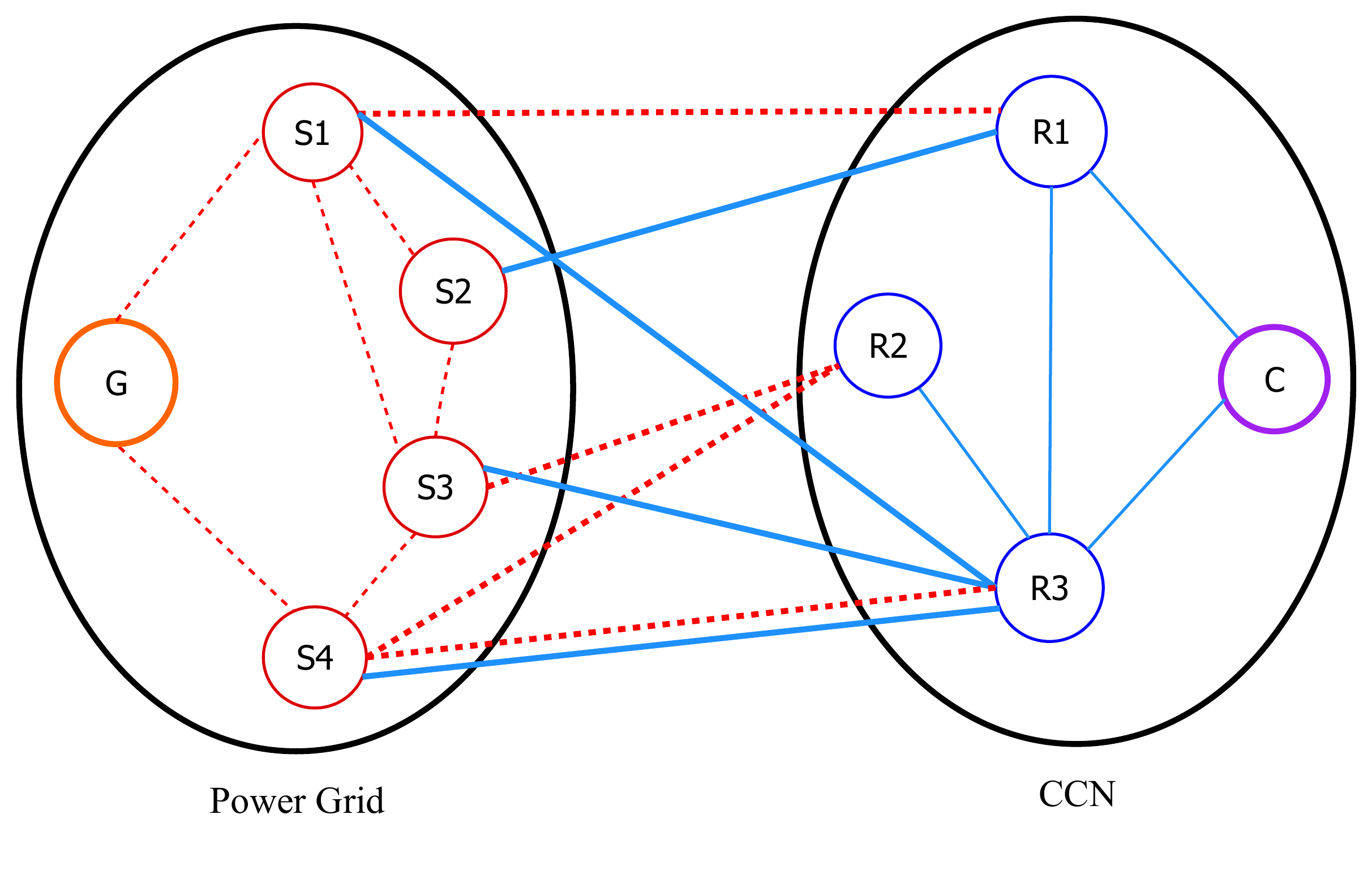}\vspace{-0.3cm}
\caption{Cyber-Physical Interdependency Model - dotted lines represent power lines and solid lines represent communication lines}
\label{Model}\vspace{-0.2cm}
\end{figure}

In this model, we say that a substation \textit{operates} if it has a path to a generator, i.e. receives power, and it is also connected to a router, i.e. sends data and receives control signals. Similarly, we say that a router \textit{operates} if it has a path to a control center, i.e. sends data and receives control signals and it is also connected to a substation, i.e. receives power. Thus, a failure in the power grid may cause a failure in the CCN and vice versa. For example, future substations will be equipped with FACTS \footnote{Flexible Alternating Current Transmission System} which control the reactance of the power lines attached to a substation and prevent those lines from overloading and failure. These FACTS receive their control through the CCN; thus, the substations connected to a router will be controlled and do not experience power overloading. On the other hand, if a substation is not connected to a router (i.e. is not controlled), it may be overloaded and fail. 

We assume that the power generators have internal control, and the control centers have backup generators; thus, they are robust to failures. In addition, in this paper, we do not consider the power flow equations, and assume that the connectivity of a substation to a generator is sufficient for receiving power. We also do not consider the amount of power supply or demand, and only focus on connectivity. Moreover, without loss of generality, we assume that there is only one generator and one control center; this is due to our model's assumption that every substation can be connected to any generator, and every router can be connected to any control centers. Although this model is not fully realistic, it captures the essential properties of interdependent networks.

\subsection{Effect of a Single Failure}\label{Example}
We start with an example demonstrating that a single failure can cascade multiple times within and between the power grid and CCN (Figure \ref{Cascade}). Suppose that initially substation $S_4$ fails (Step 1). As a result, all the edges attached to $S_4$ fail, and router $R_3$ loses its power and fails (Step 2); Consequently, substations $S_1$ and $S_3$ lose their control, and router $R_2$ loses its connection to the control center $C$, and all fail (Step 3). Finally router $R_1$ loses its power, and substation $S_2$ loses its connection to the generator $G$, and both fail (Step 4).

\begin{figure}[h]
\centering
\subfigure[Step1 - $S_4$ fails, initially]
{\label{step1}\includegraphics[scale=0.25, angle=-90]{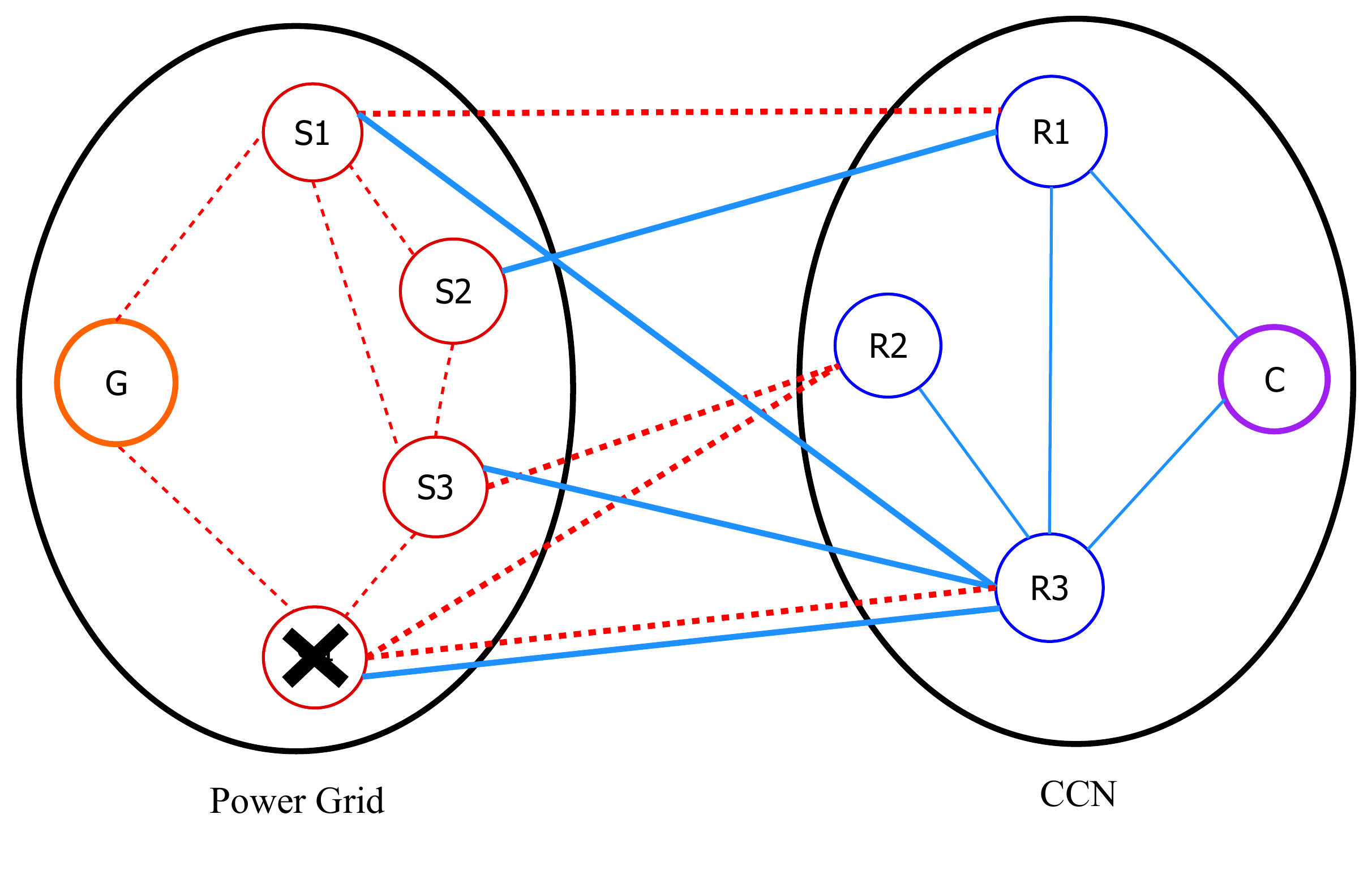}}                
\subfigure[Step2 - $R_3$ fails]
{\label{step2}\includegraphics[scale=0.25, angle=-90]{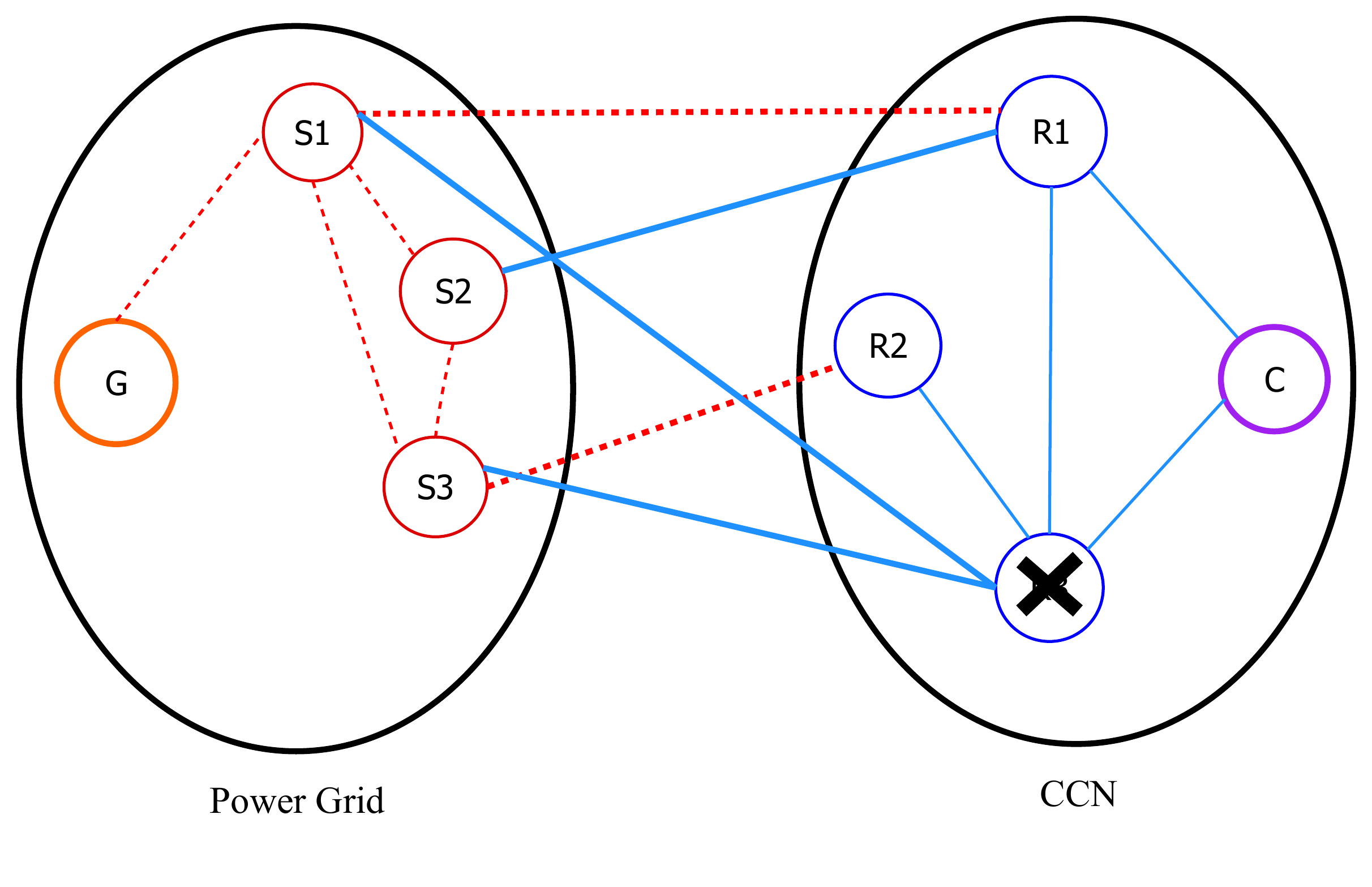}}
\subfigure[Step3 - $S_1$, $S_3$ and $R_2$ fail]
{\label{step3}\includegraphics[scale=0.25, angle=-90]{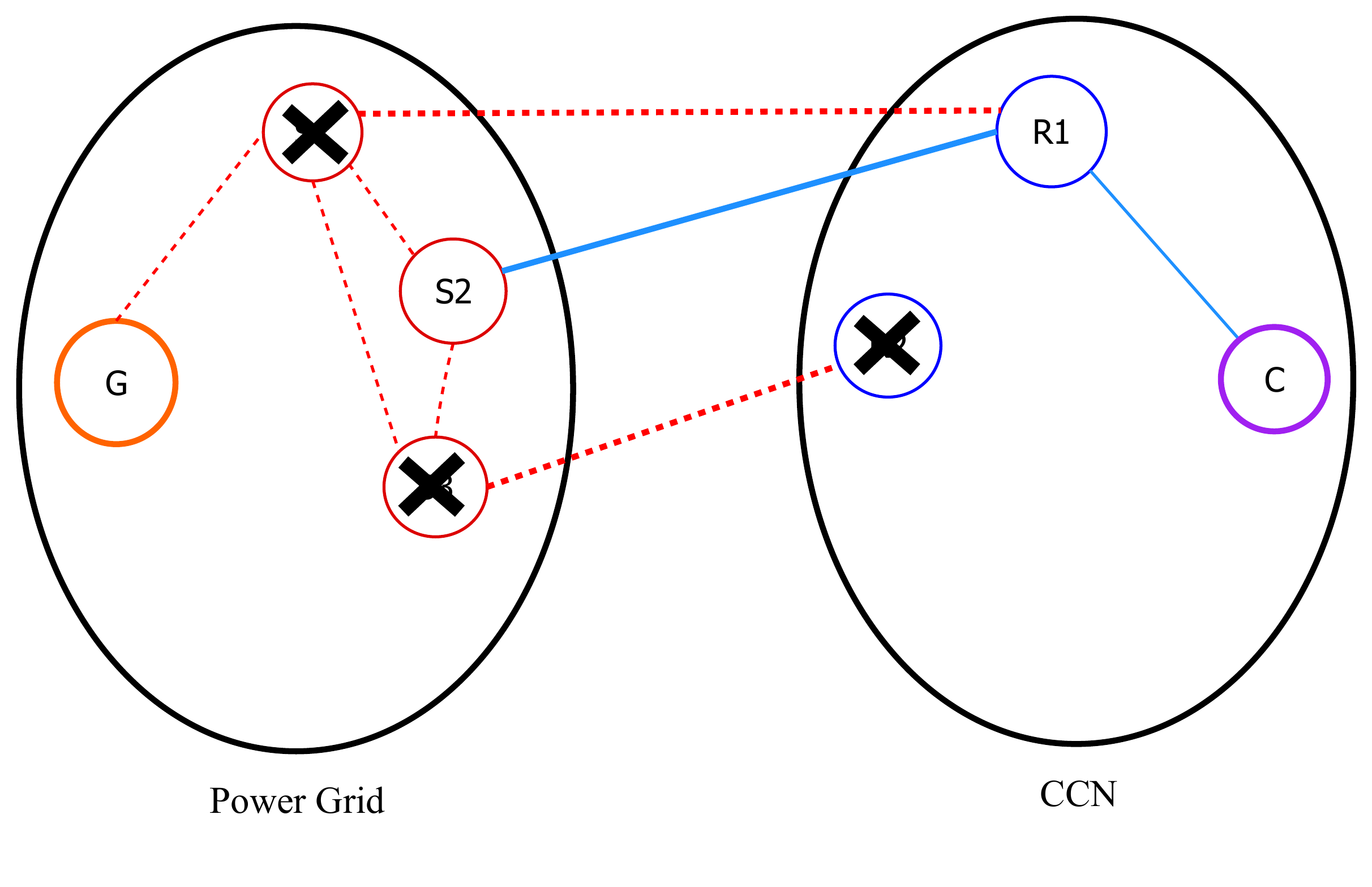}}
\subfigure[Step4 - $R_1$ and $S_2$ fail]
{\label{step4}\includegraphics[scale=0.25, angle=-90]{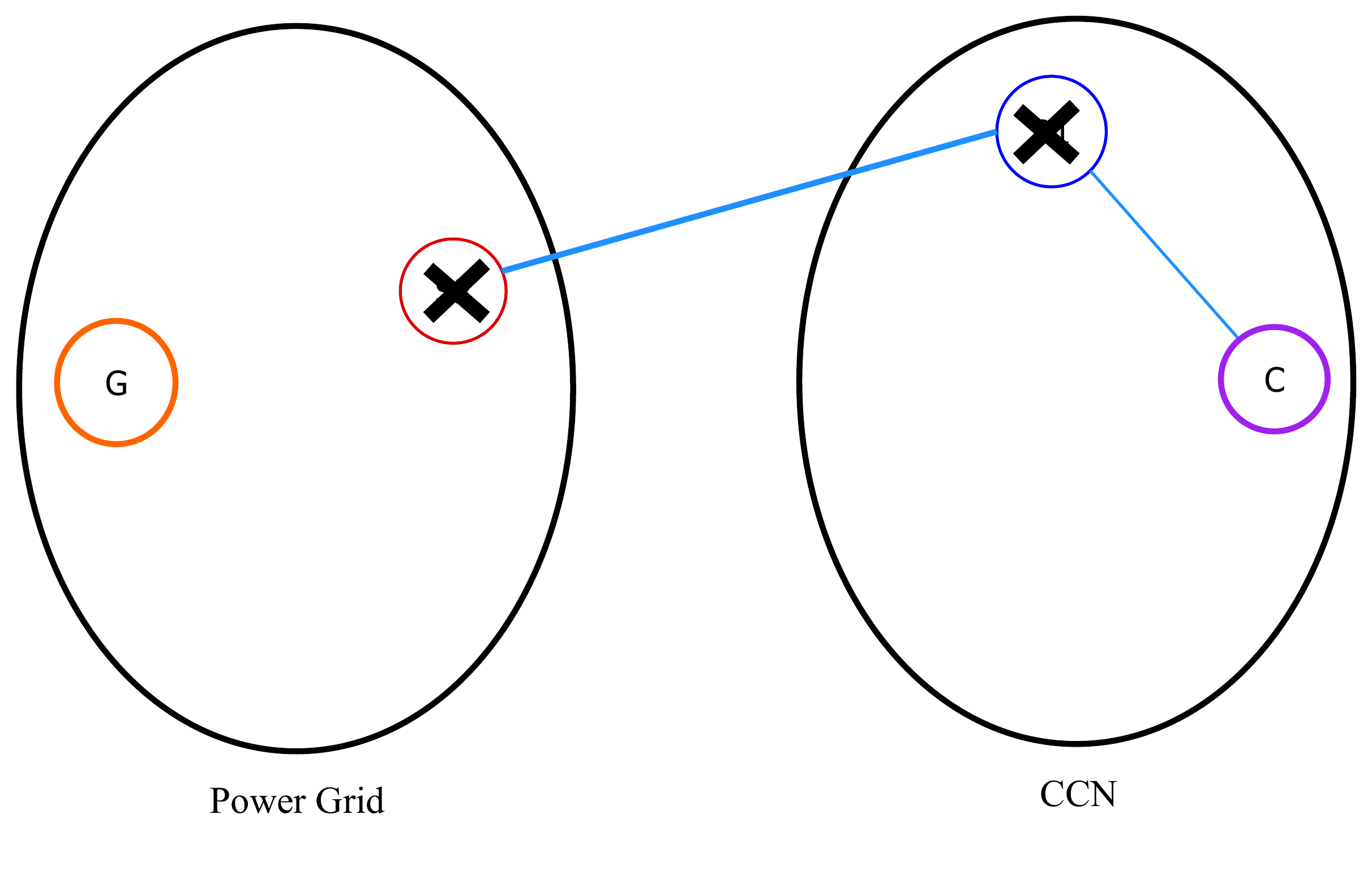}}
\caption{Cascade of a single failure in an interdependent model}
\label{Cascade}
\end{figure}

In contrast, suppose that the power grid is not dependent on the CCN. In this case a substation fails, if and only if it is disconnected from the generator. For example, consider only the power grid from Figure \ref{Model} and, suppose that substation $S_4$ fails 
It can be seen that no other substation will be disconnected from the generator; thus, no further failure occurs. This example indicates that interdependency makes the networks more vulnerable, while it is essential to their operation.


It was seen in the example of Figure \ref{Cascade} that a single failure can lead to the failure of all nodes in both networks. We define the failure of all nodes in both networks as \textit{Total Failure}. This observation leads to an important question. What is the minimum number of nodes whose removals will lead to total failure? We define the minimum total failure removals (MTFR) as a metric that helps to measure the robustness of a network, i.e. the larger the MTFR, the more robust the network. In particular, we consider two types of Node and Edge removals which we refer to as Node-MTFR and Edge-MTFR metrics.

In the case of a single network $G=\{V,E\}$, the smallest set of nodes that can cause a total failure, i.e. disconnect all the nodes in $V$ from source $S$, is the set of nodes directly connected to $S$. 
Consequently, a star topology is the most robust network topology, since all of the nodes in a star are directly connected to the source.


\section{Interdependency between Networks with Star Topologies}\label{STAR}
In order to analyze the interdependency between the power grid and the CCN, we assume that both networks have star topologies. Under this assumption, all of the substations in the power grid are directly connected to the generator; thus no substation's failure can disconnect the other substations from the generator. Similarly, all of the routers in the CCN are directly connected to the control center, and no router's failure can disconnect the other routers from the control center. Therefore, any failure in the system would be \textit{only} due to the interdependency between the networks, i.e. a substation fails if and only if it loses its connection to the CCN, and similarly a router fails if and only if it loses its connection to the power grid. This property gives us the opportunity to study the behavior of interdependent networks, and the effects of interdependency on the robustness of networks. Since both networks have star topologies, the interdependent network under study has a bipartite topology, i.e. no edge connects the nodes inside one network. In the following, we consider two distinct models of unidirectional and bidirectional interdependency. We analyze the networks under each model, and compare their complexity and robustness.

\subsection{Unidirectional Interdependency}\label{Unidir}
Under the unidirectional interdependency model, the edges between the networks are directed, i.e. if a power node $S_i$ provides power for a router $R_j$, the router $R_j$ does not necessarily provide the control signal for the same power node $S_i$ (Figure \ref{Uni1}). 
In the following we show that finding the Node-MTFR is in fact a hitting cycle problem, and prove that it is an NP-complete problem. In order to do so we start with the following lemmas:


\begin{figure}[h]
\centering
\subfigure[Unidirectional Interdependency]
{\label{Uni1}\includegraphics[scale=0.25]{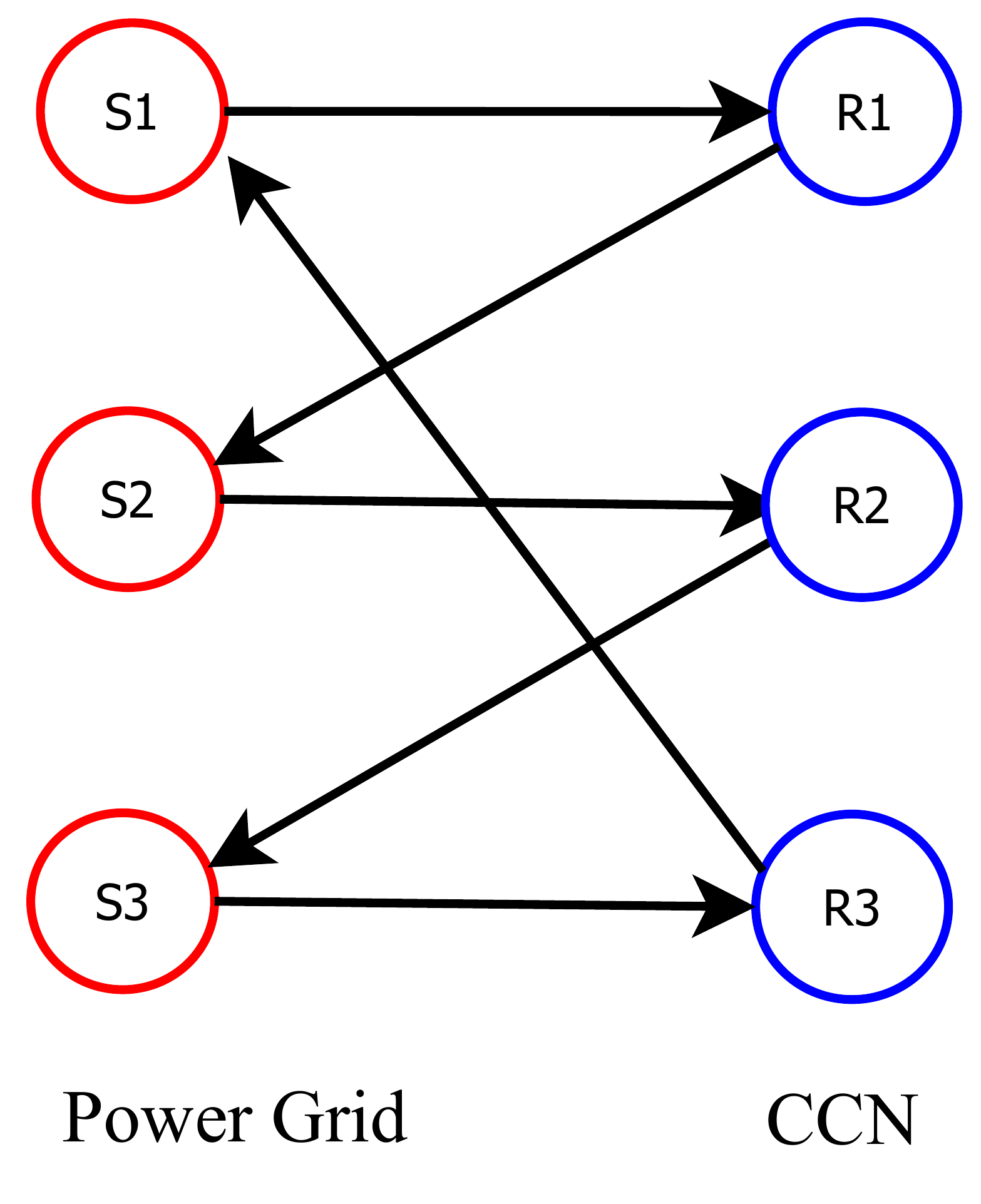}}                
\subfigure[Bidirectional Interdependency]
{\label{Bi1}\includegraphics[scale=0.25]{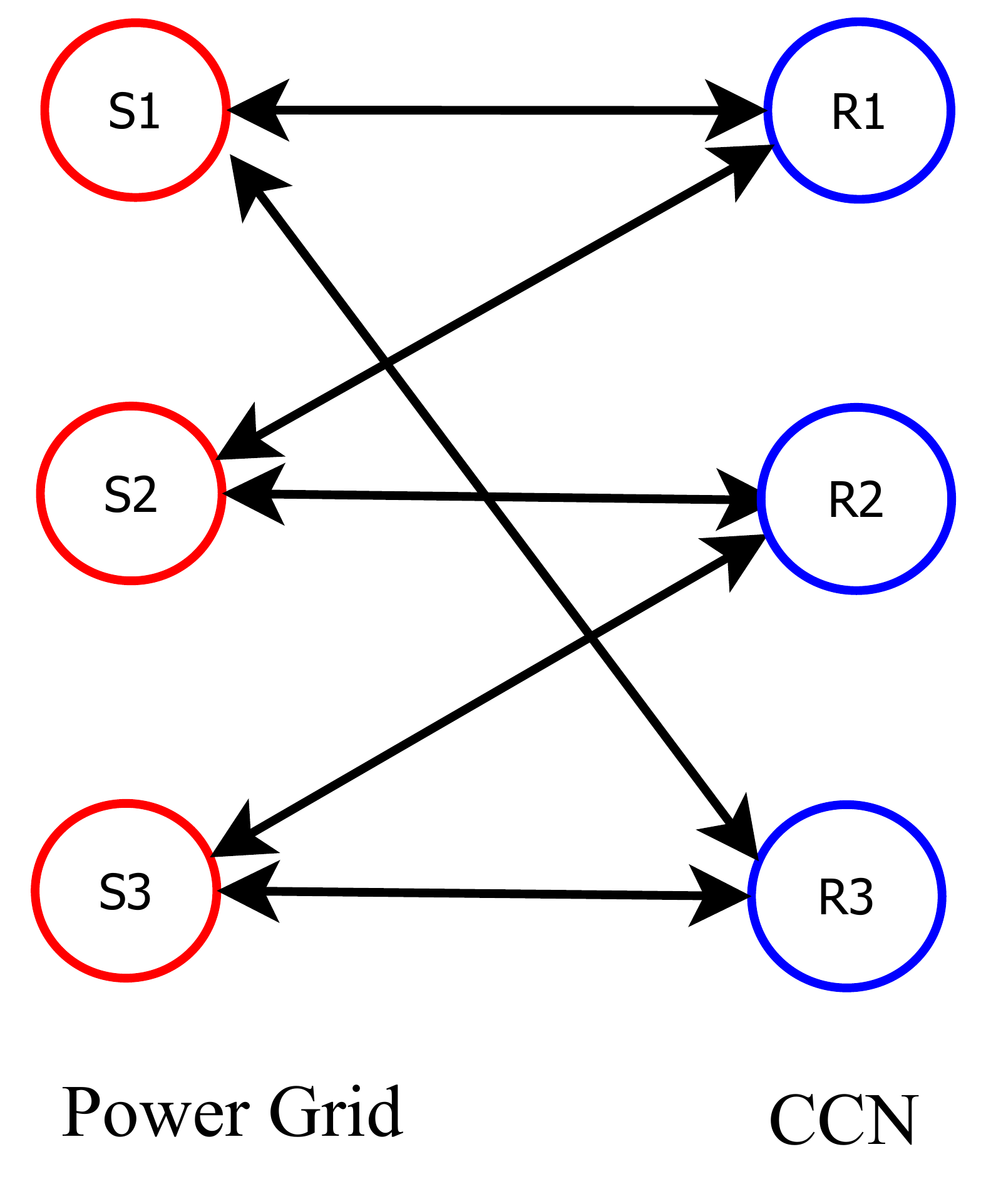}}
\caption{Graph structure under different interdependency models}
\label{Comparison}
\end{figure}

\begin{lemma}\label{lemma1}
A network with one or more operating nodes has at least one cycle.
\end{lemma}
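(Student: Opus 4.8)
The plan is to recast the operational dependencies as a directed graph and then invoke the elementary fact that a finite digraph with positive minimum in-degree must contain a cycle. First I would build a dependency digraph $D$ on the set of substations and routers only, deliberately excluding the generator $G$ and the control center $C$ since by assumption these never fail. I draw an arc $u \to v$ whenever $v$ relies on $u$ to operate: an arc from each substation to every router it powers, and an arc from each router to every substation it controls. Because both networks are stars, every substation is wired directly to $G$ and every router directly to $C$, so the ``path to a generator'' and ``connection to a control center'' clauses in the definition of \emph{operates} are satisfied automatically; the only binding condition that remains is the cross-network one. Hence in $D$ a substation operates iff it still has an operating router feeding it control, and symmetrically a router operates iff it still has an operating substation feeding it power.

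Next I would restrict attention to the set $O$ of currently operating nodes and work inside the induced subgraph $D[O]$. The central observation is that every vertex of $O$ has in-degree at least one within $D[O]$: if $v$ operates, then by definition some provider is still connected to $v$; that provider has therefore not failed, so it is itself operating and lies in $O$, contributing an arc into $v$. Consequently $D[O]$ is a finite digraph that is nonempty by hypothesis and in which no vertex is a source.

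The final step is the standard combinatorial fact that a finite directed graph with minimum in-degree $\geq 1$ contains a directed cycle: starting at any vertex and repeatedly stepping to an in-neighbour yields an arbitrarily long backward walk, which on a finite vertex set must eventually revisit a vertex, closing a cycle. Applying this to $D[O]$ produces the required cycle among operating nodes, which also foreshadows the later reduction of Node-MTFR to a cycle-hitting problem.

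I expect the combinatorics to be entirely routine and the only real care to lie in the setup. The subtlety is fixing the arc directions and the precise meaning of ``connected to,'' so that operation genuinely forces an operating predecessor rather than merely a predecessor that exists on paper (a failed provider removes its incident edges and hence cannot sustain $v$). Equally important is quarantining $G$ and $C$: as perpetual sources they would have in-degree zero and violate the hypothesis of the cycle lemma, so the argument must be run on the bipartite substation/router digraph alone, which the star assumption legitimizes.
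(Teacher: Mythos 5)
Your proof is correct and rests on the same core fact as the paper's: an operating node must have an operating in-neighbour, so a finite digraph of operating nodes has minimum in-degree at least one and therefore contains a cycle (the paper phrases this contrapositively, deriving a contradiction from a source vertex in an acyclic graph, while you run the backward-walk argument directly). Your setup is if anything slightly more careful about excluding $G$ and $C$ and about requiring the predecessor itself to be operating, but the approach is essentially identical.
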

\begin{proof}
We prove by contradiction that no node in a network can operate if there is no cycle in the network. Suppose that there is no cycle, i.e. all nodes are connected through one or more paths. First, remove all of the non-operating nodes; hence, the remaining nodes are operating, and the network is still acyclic. Now consider the starting node of one of the paths which is either a substation $S_i$ or a router $R_j$. This starting node does not have any incoming edges; therefore, substation $S_i$ (router $R_j$) does not receive any control (power) and cannot operate which is a contradiction with the assumption of nodes being operating. 

Now we show that existence of at least one cycle is sufficient to have an operating node. In a bipartite graph, every substation (router) in a cycle receives an incoming edge from a router (substation) in that cycle; thus, the nodes in that cycle can operate. If all the other nodes of the network receive an incoming edge directly or through a path starting from a node in that cycle, those nodes will be operating, too. 


\end{proof}

\begin{lemma}\label{lemma2}
To stop the operation of any cycle, one of the nodes in the cycle should be removed (Cycles are Stable Components).
\end{lemma}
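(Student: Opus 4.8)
The plan is to prove Lemma~\ref{lemma2} by showing that a cycle, once formed, is \emph{self-sustaining}: every node in the cycle derives its required incoming resource from another node in the same cycle, so no external removal outside the cycle can break it. Concretely, I would argue the contrapositive: if no node of a given cycle is removed, then every node of that cycle continues to operate. This reduces the claim to verifying that the operating condition for each node is internally satisfied by the cycle itself.

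First I would recall, from the model in Section~\ref{Model_sec}, what it means for a node to \emph{operate}: a substation operates if it is connected to a router (and has power), and a router operates if it is connected to a substation (and has a path to the control center). In the star-topology bipartite setting of Section~\ref{STAR}, every substation is directly connected to the generator and every router is directly connected to the control center, so the only failure mechanism is the loss of the interdependency edge. Thus the operating condition reduces purely to each node having an incoming edge from an operating node of the opposite type. The key structural fact, already established in the proof of Lemma~\ref{lemma1}, is that in a bipartite cycle every substation receives an incoming edge from a router in the cycle and every router receives an incoming edge from a substation in the cycle.

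Next I would combine these observations. Suppose a cycle $C = S_{i_1} \to R_{j_1} \to S_{i_2} \to R_{j_2} \to \cdots \to S_{i_1}$ exists, and suppose for contradiction that it ceases to operate while \emph{none} of its own nodes has been removed. Then some node in $C$, say $S_{i_k}$, must have stopped operating; but $S_{i_k}$ receives its incoming edge from $R_{j_{k-1}} \in C$, which by assumption was not removed. The same argument applies to each router receiving its edge from a substation in $C$. Hence each node's supplier within the cycle is still present, so no node in $C$ can lose its incoming resource. This contradicts the assumption that the cycle stopped operating, establishing that removing a node \emph{outside} the cycle cannot halt it, and therefore at least one node \emph{in} the cycle must be removed.

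The main obstacle I anticipate is being careful about the distinction between a node failing because its own in-neighbor in the cycle was removed versus failing for an unrelated reason external to the cycle. The argument must make precise that the cyclic dependency is \emph{closed}: the set of suppliers for the nodes of $C$ is contained in $C$ itself, so operation of $C$ is decoupled from the rest of the network. Once that closure property is stated cleanly, the contradiction is immediate, so the proof should be short; the delicacy lies entirely in phrasing the invariant correctly rather than in any computation.
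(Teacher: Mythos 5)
Your proposal is correct and follows essentially the same route as the paper: both rest on the observation that every node of a cycle receives its required incoming edge from another node of the same cycle, so the cycle's supplier set is closed and removals outside the cycle cannot affect it. Your contrapositive phrasing and explicit statement of the closure property only make the paper's direct argument slightly more formal; no new idea is needed or introduced.
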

\begin{proof}
By definition, every substation (router) in the bipartite graph remains operating if it has an incoming edge from a router (substation). Every node in a cycle receives at least one incoming edge from the nodes inside that cycle; therefore, the removal of nodes outside the cycle will not affect the operation of the nodes inside that cycle. As a result, to stop the operation of a cycle, one of its nodes must be removed. 
\end{proof}

Note that stopping the operation of a cycle is not equivalent to stopping the operation of all the nodes inside the cycle. If the nodes inside a cycle are isolated from the other nodes, removing exactly one node from the cycle will stop all of them from operating. However, if nodes inside a cycle receive incoming edges from other nodes outside of the cycle, more node removals are needed to cause the failure of all of the nodes in that cycle.

\begin{lemma}\label{lemma3}
For total failure, at least one node from every cycle should be removed .
\end{lemma}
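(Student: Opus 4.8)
The plan is to obtain Lemma \ref{lemma3} as an almost immediate consequence of Lemma \ref{lemma2}, via a contradiction argument. I would suppose that some set of node removals produces total failure (every node in both networks stops operating), yet there exists at least one cycle $C$ none of whose nodes has been removed, and then derive a contradiction.

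First I would recall what Lemma \ref{lemma2} gives us: every node of a cycle $C$ receives an incoming edge from another node lying on $C$, so deleting nodes \emph{outside} $C$ cannot affect the operation of the nodes \emph{inside} $C$. Applying this to our situation, if none of the nodes of $C$ is itself removed, then each node of $C$ still possesses its in-cycle incoming edge; hence every node of $C$ keeps receiving its required power or control signal and continues to operate. In other words, an untouched cycle is self-sustaining regardless of what happens elsewhere in the network.

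The key step is then to confront this with the meaning of total failure, which demands that \emph{every} node in both networks fail, and in particular that every node of $C$ fail. This directly contradicts the previous paragraph's conclusion that all nodes of $C$ remain operating whenever $C$ is left intact. Therefore the assumption that some cycle survives untouched is untenable, and I would conclude that achieving total failure forces at least one node to be removed from every cycle. I expect the argument to be short, since it is essentially a contrapositive repackaging of Lemma \ref{lemma2}; the only point needing care is to keep distinct the two notions of ``stopping a cycle.'' The remark following Lemma \ref{lemma2} warns that a broken cycle's nodes may still operate through edges coming from outside, but that subtlety concerns the \emph{sufficiency} direction and does not obstruct the present necessity claim, for which I only use the easy implication that an intact cycle keeps all of its nodes alive.
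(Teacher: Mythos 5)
Your proposal is correct and follows essentially the same route as the paper: a proof by contradiction that invokes Lemma \ref{lemma2} to conclude that an untouched cycle keeps all of its nodes operating, contradicting total failure. Your additional remark distinguishing the necessity direction from the subtlety noted after Lemma \ref{lemma2} is a sensible clarification but does not change the argument.
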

\begin{proof}
By contradiction - Suppose that there exist a cycle so that none of its nodes are removed. By lemma \ref{lemma2}, all of the nodes inside that cycle remain operating, which contradicts the assumption of total failure.
\end{proof}

\begin{theorem}\label{HittingCycle}
The minimum number of nodes that hit all of the cycles in a bipartite graph is the optimal solution for the Node-MTFR problem.
\end{theorem}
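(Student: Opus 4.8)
The plan is to prove the equality of optimal values by showing that the two optimization problems have identical feasible sets: a set of nodes causes total failure \emph{if and only if} it hits every cycle of the bipartite graph. Once the family of total-failure removal sets and the family of cycle-hitting sets coincide, their minimum cardinalities must agree, so the minimum cycle-hitting set is exactly the Node-MTFR.

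First I would dispatch the easy direction. Let $D$ be any removal set that produces total failure. By Lemma \ref{lemma3}, $D$ must contain at least one node from every cycle, so $D$ is a cycle-hitting set. Hence every feasible solution of Node-MTFR is a feasible hitting set, which already gives $\mathrm{MTFR} \ge (\text{minimum hitting-set size})$.

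For the reverse direction I would take an arbitrary cycle-hitting set $H$ and argue that removing $H$ forces total failure. The key observation is that deleting nodes cannot create new cycles: any cycle of the residual graph $G \setminus H$ is already a cycle of $G$, and since $H$ meets every cycle of $G$, the residual graph is acyclic. Applying the contrapositive of Lemma \ref{lemma1} to the residual network---an acyclic network has no operating node---I conclude that after removing $H$ no node operates, i.e. $H$ itself causes total failure. Thus every cycle-hitting set is a total-failure set, giving $(\text{minimum hitting-set size}) \ge \mathrm{MTFR}$. Combining the two inequalities yields equality, so a minimum cycle-hitting set solves Node-MTFR.

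The main subtlety to handle carefully lies in this reverse direction, and it is the crux of the argument rather than a routine step: I must justify both that node deletion introduces no cycle absent from $G$ (so that hitting all cycles of $G$ genuinely leaves an acyclic residual) and that Lemma \ref{lemma1} applies verbatim to the residual \emph{subnetwork}, not merely to $G$. These are exactly the facts that let me convert ``no cycle remains'' into ``no node operates.'' The forward direction, by contrast, is immediate from Lemma \ref{lemma3} and requires no further work.
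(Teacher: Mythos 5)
Your proof is correct, and it is actually more complete than the one in the paper. The paper disposes of this theorem with a single line, ``Immediate from Lemma \ref{lemma3},'' which only establishes the direction you call easy: every removal set causing total failure must hit all cycles, so the MTFR is at least the minimum hitting-set size. The paper never explicitly argues the converse --- that removing a minimum cycle-hitting set actually \emph{suffices} to cause total failure --- even though that direction is needed for the stated equivalence. You correctly identify this as the crux and supply the missing argument: node deletion cannot create cycles, so the residual graph after removing a hitting set is acyclic, and the contrapositive of Lemma \ref{lemma1} (an acyclic network has no operating node) then forces every surviving node to fail. The paper implicitly relies on exactly this use of Lemma \ref{lemma1}, since the second half of that lemma's proof establishes that cycles are what sustain operation, but it never spells out the application to the residual subnetwork. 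In short, you and the paper reach the same conclusion by the same underlying lemmas, but your two-inequality structure makes explicit a sufficiency step the paper leaves to the reader; nothing in your argument is wrong or redundant.
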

\begin{proof}
Immediate from lemma \ref{lemma3}.
\end{proof}

\begin{corollary}
Finding the Node-MTFR in star networks with unidirectional interdependency is NP-complete.
\end{corollary}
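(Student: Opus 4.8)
The plan is to prove NP-completeness of Node-MTFR by reduction from the minimum hitting cycle problem, leveraging Theorem~\ref{HittingCycle} which already establishes that Node-MTFR is \emph{equivalent} to finding the minimum number of nodes hitting all cycles in the bipartite graph. The overall strategy has two parts: first, membership in NP; second, NP-hardness via a reduction from a known NP-complete problem.

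First I would argue membership in NP. Given a candidate set of nodes to remove, one can verify in polynomial time that the set is a valid solution: remove the nodes, then simulate the cascading failure process (which by Lemma~\ref{lemma1} and Lemma~\ref{lemma2} terminates when no operating cycle remains), and check that all remaining nodes have failed. Equivalently, using Theorem~\ref{HittingCycle}, one checks in polynomial time that the candidate set intersects every directed cycle, e.g.\ by confirming that the graph with those nodes deleted is acyclic (a single topological-sort test). Hence the decision version is in NP.

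Next I would establish NP-hardness by reducing the known NP-complete \emph{minimum directed vertex feedback set} problem (equivalently, the problem of hitting all directed cycles with the fewest vertices) to Node-MTFR on a bipartite graph. The subtlety is that the directed feedback vertex set problem is NP-complete on general directed graphs, but our graphs are \emph{bipartite} (no edges within the substation set or within the router set). So the core technical step is a gadget construction: given an arbitrary directed graph $H$ on which finding the minimum feedback vertex set is NP-hard, I would subdivide every directed edge of $H$ by inserting a new intermediate node, so that the resulting graph is bipartite with the original vertices on one side and the edge-vertices on the other. Each original directed cycle of length $\ell$ becomes a directed cycle of length $2\ell$ in the bipartite graph, and a vertex cover of the cycles is preserved up to this correspondence, so the minimum hitting set is unchanged when we restrict removals to original-vertex images.

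The hard part will be ensuring that the bipartite subdivision reduction is both correct and cost-preserving: I must verify that an optimal solution to the bipartite instance never gains by deleting a subdivision (edge) node rather than an original node, so that the optimum of the Node-MTFR instance equals the optimum of the feedback vertex set instance on $H$. This requires a short exchange argument showing any cycle-hitting node set using edge-nodes can be converted, without increasing its size, to one using only original-vertex images. Combined with Theorem~\ref{HittingCycle}, which identifies Node-MTFR with minimum cycle hitting, this reduction completes the NP-hardness proof, and together with membership in NP yields NP-completeness.
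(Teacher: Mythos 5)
Your proposal is correct, and it reaches NP-completeness by a genuinely different route for the hardness half. Both you and the paper first invoke Theorem~\ref{HittingCycle} to identify Node-MTFR with the minimum feedback vertex set (FVS) problem on a bipartite directed graph, and your NP-membership argument (delete the candidate set and test acyclicity) is the standard one and is fine. Where you diverge is in establishing hardness on \emph{bipartite} instances: the paper simply cites the result of Cai \emph{et al.}\ that FVS is already NP-complete on bipartite tournaments, a subclass of bipartite digraphs, so Node-MTFR (FVS on general bipartite digraphs) inherits hardness with no construction needed. You instead give an explicit, self-contained reduction from directed FVS on arbitrary digraphs by subdividing every arc with a new node, which bipartitizes the graph while preserving the cycle structure, and you correctly identify the one nontrivial step: the exchange argument showing that an optimal solution never needs to delete a subdivision node, since every cycle through $x_{uv}$ also passes through $u$ and $v$, so $x_{uv}$ can be swapped for $u$ without increasing the solution size. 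That argument goes through, so your reduction is cost-preserving and the proof is valid. The trade-off is that your version is longer but elementary and does not depend on the bipartite-tournament literature, whereas the paper's is a two-line citation. One small point you should still address for full rigor: the subdivided graph must be a legitimate instance of the interdependency model (every substation and router is required to have at least one incoming edge), which you can guarantee by first restricting $H$ to vertices lying on at least one cycle --- a standard, polynomial-time preprocessing step that does not change the FVS optimum.
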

\begin{proof}
By Theorem \ref{HittingCycle}, the Node-MTFR problem is a hitting cycle problem which is exactly equivalent to the well-known problem of Feedback Vertex Set (FVS). By definition, FVS in a graph finds the smallest set of nodes so that their removals make the graph acyclic; and it is known to be NP-complete for general graphs \cite{Garey}. Moreover, Cai \textit{et al.} proved that FVS is NP-complete for a special class of bipartite graphs called bipartite tournament \cite{Cai}. Therefore, the Node-MTFR problem which is finding FVS in general bipartite graphs is also NP-complete.
\end{proof}

\subsubsection{Problem Formulation}
It was shown in lemma \ref{lemma3} that finding the Node-MTFR is a hitting cycle problem. Next, we present a cycle-based Integer Linear Programming (ILP) formulation for this problem, assuming that all of the cycles are given. Let $N$ be a $n \times 1$ binary vector so that each component $N_j$ takes values $1$ if node $j$ is removed and $0$ otherwise. Let matrix $A \in R^{m \times n}$ be a mapping between the $m$ cycles and $n$ nodes, where $A_{ij}=1$ if cycle $i$ contains node $j$ and $A_{ij}=0$ otherwise. Let $e$ be a $m \times 1$ vector of ones. The Node-MTFR problem can be formulated as follows.

\begin{align}
\mbox{minimize} \quad &\sum_{j=1}^n N_{j}\label{MFRR-obj} \\
\mbox{subject to} \quad &A \times N \geq e  \label{hitting-cycle} \\
 \quad &N_{j} \in \{0,1\}, \quad j=1,\cdots,n \label{binary}
\end{align}

In this formulation, the objective is to minimize the number of node removals. Every row $i$ of constraint (\ref{hitting-cycle}) requires that at least one of the removed nodes should hit cycle $i$. In the following, we develop heuristics to solve the problem.

\subsubsection{Heuristics}\label{Ap_Alg}
Since computing the Node-MTFR is computationally difficult, we consider approximation algorithms that give a near-optimal set of node removals in polynomial time. As explained in section \ref{Unidir}, the node removals problem is equivalent to a hitting cycle problem. Thus, if we have the set of all cycles in the graph, we can apply a greedy algorithm devised for solving the hitting set problem. The input to the algorithm is the set of cycles (each cycle is defined as the set of nodes it contains), and the set of nodes in the graph. This cycle-based algorithm is an iterative algorithm that works as follows. In each iteration, it removes the node that is shared among maximum number of cycles, updates the set of cycles, and repeats until no cycle remains. 

This cycle-based algorithm needs the set of all cycles as input; however, in general, a graph may have an exponential number of cycles. To overcome this deficiency, we devise a new algorithm that relies on the degree of the nodes instead of the cycles. The input to the algorithm is the adjacency matrix of the graph. The algorithm is iterative: Each iteration starts with a pruning stage in which the algorithm removes all of the edges that do not belong to a cycle. In the next stage of the iteration, it removes the node that has the maximum outgoing degree. Next, the algorithm removes all nodes that fail as a result of the cascading effect of that removal. Finally, the algorithm updates the adjacency matrix of the graph and repeats the iteration until no node remains. 

In the following, we compare the performance of these algorithms with the optimal solution. We consider a random bipartite graph with $N$ nodes on each side. Since enumerating all the cycles requires exponential time, we keep the size of $N$ small, and limit the graph to have small cycles. To do that, instead of randomly generating edges, we randomly generate cycles of size 6 or smaller until all the nodes have at least one incoming edge. For each value of $N$, we generate 100 random graphs and then apply our algorithms to each graph in order to find the minimum node removals. Moreover, for the optimal solution, we solve the hitting cycle problem as given by (\ref{MFRR-obj})-(\ref{binary}) using CPLEX. As can be seen from Figure \ref{alg_compare}, on average, the degree-based algorithm gives a slightly larger number of nodes compared with the cycle-based algorithm and optimal solution; however, it is very fast as it does not need to enumerate all of the cycles.


\begin{figure}[ht]
\centering
\includegraphics[scale=0.5]{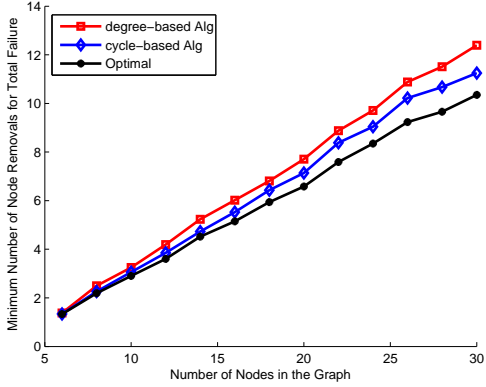}\vspace{-0.3cm}
\caption{Comparing different algorithms with optimal solution}
\label{alg_compare}\vspace{-0.2cm}
\end{figure}

\subsubsection{Minimum Edge Removals}
An alternative version of the problem is the minimum number of \textit{edges} needed to be removed to cause a total failure. Similar to lemmas \ref{lemma2} and \ref{lemma3}, to stop the operation of any cycle, one should remove one of its edges, and to have a total failure, at least one edge from every cycle should be removed. Consequently, we have the following results:

\begin{theorem}\label{HittingEdge}
The minimum number of edges that hit all of the cycles is the optimal solution for the Edge-MTFR problem.
\end{theorem}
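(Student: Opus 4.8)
The plan is to mirror the node-based development that led to Theorem \ref{HittingCycle}, replacing node removals by edge removals throughout. Concretely, I would first establish edge analogues of Lemma \ref{lemma2} and Lemma \ref{lemma3}, and then combine them with Lemma \ref{lemma1} to pin down the optimum from both sides, so that the theorem follows just as immediately in the edge setting as it did in the node setting.

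First I would prove the edge analogue of Lemma \ref{lemma2}: to halt the operation of a cycle one must delete one of that cycle's own edges. The argument copies the node case. Every substation (router) lying on a cycle receives an incoming edge from the router (substation) that precedes it on the same cycle, and all of these supporting edges are internal to the cycle. Deleting edges that do not belong to the cycle therefore leaves every intra-cycle edge intact, so each cycle node still has its incoming support and the whole cycle keeps operating. Hence a cycle can be stopped only by removing an edge that lies on it.

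From this I would derive the lower bound, the edge analogue of Lemma \ref{lemma3}, by contradiction: if a total-failure edge set left some cycle with all of its edges present, then by the previous step that cycle would keep operating and its nodes would never fail, contradicting total failure. Thus any edge set producing total failure must contain at least one edge of every cycle, i.e.\ must hit all cycles, so the size of any feasible Edge-MTFR solution is at least the minimum number of cycle-hitting edges. For the matching upper bound I would argue sufficiency: a set of edges meeting every cycle is exactly a feedback edge set, so its removal leaves an acyclic graph, and by Lemma \ref{lemma1} a network with no cycle has no operating node; hence removing a minimum cycle-hitting edge set already yields total failure. Combining the two bounds gives that the minimum number of edges hitting all cycles equals the Edge-MTFR optimum, which is the claim.

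I expect the only genuinely delicate points to be, first, the edge analogue of Lemma \ref{lemma2} --- making precise that external edge deletions cannot disturb a cycle because its nodes are sustained entirely from within --- and, second, the sufficiency direction, where one must invoke Lemma \ref{lemma1} to guarantee that merely breaking every cycle (rather than isolating every node) is enough to extinguish all operation. Everything else is a routine transcription of the node-based proof.
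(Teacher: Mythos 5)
Your proposal is correct and follows essentially the same route as the paper, which itself justifies the theorem by asserting the edge analogues of Lemmas \ref{lemma2} and \ref{lemma3} and then declaring the proof trivial. You are in fact somewhat more careful than the paper, since you also make explicit the sufficiency direction (via Lemma \ref{lemma1}, removing a cycle-hitting edge set leaves an acyclic graph with no operating nodes), which the paper leaves implicit in both the node and edge versions.
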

\begin{proof}
Trivial.
\end{proof}

\begin{corollary}
Finding the minimum edge removals for total failure in the star networks with unidirectional interdependency is NP-complete.
\end{corollary}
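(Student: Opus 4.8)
The plan is to leverage Theorem~\ref{HittingEdge}, which identifies the Edge-MTFR problem with the problem of removing a minimum set of edges that hits every directed cycle of the bipartite graph. Hitting all directed cycles by edge deletions is exactly the \emph{Feedback Arc Set} (FAS) problem, so the corollary amounts to showing that FAS is NP-complete when the input is restricted to directed bipartite graphs. Membership in NP is immediate: given a candidate edge set, one verifies in polynomial time both that its size is within the budget and that its removal leaves an acyclic graph (for instance by attempting a topological sort). The work therefore lies entirely in the hardness direction.

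For NP-hardness I would reduce from FAS on general directed graphs, which is one of Karp's classical NP-complete problems. Given an arbitrary directed graph $G=(V,E)$, I construct a graph $G'$ by \emph{subdividing} every edge: each directed edge $e=(u,v)$ is replaced by a fresh vertex $x_e$ together with the two edges $(u,x_e)$ and $(x_e,v)$. The vertex set then splits into the original vertices $V$ and the subdivision vertices $\{x_e : e\in E\}$, and every edge of $G'$ runs between these two classes; hence $G'$ is bipartite and is produced in polynomial time.

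The heart of the argument is a size-preserving correspondence between feedback arc sets of $G$ and of $G'$, which rests on the observation that the directed cycles of $G'$ are in bijection with those of $G$. Since each subdivision vertex $x_e$ has a unique in-neighbour and a unique out-neighbour, any cycle of $G'$ must traverse the length-two path $u\to x_e\to v$ whenever it visits $x_e$; contracting these paths back to their edges $e$ turns a cycle of $G'$ into a cycle of $G$, and conversely every cycle of $G$ lifts uniquely to a cycle of $G'$. Given a feedback arc set $F\subseteq E$ of $G$, deleting the first half $(u,x_e)$ for each $e\in F$ breaks every cycle of $G'$ using the same number of edges; conversely, mapping each edge of a feedback arc set $F'$ of $G'$ back to its original edge yields a set $F$ with $|F|\le|F'|$ that meets every cycle of $G$. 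Hence $G$ has a feedback arc set of size at most $k$ if and only if $G'$ does, which completes the reduction.

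The step I expect to be the main obstacle is verifying this cycle bijection rigorously — in particular confirming that the subdivision neither merges distinct cycles nor creates spurious ones, and that the back-and-forth maps on arc sets respect the budget $k$ in both directions. Once that correspondence is pinned down, NP-completeness of bipartite FAS, and hence of Edge-MTFR, follows at once. As an alternative to the self-contained reduction, one could instead cite a known NP-completeness result for feedback arc set on a restricted bipartite class, in the same spirit as the bipartite-tournament result invoked for the node version.
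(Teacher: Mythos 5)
Your proposal is correct, but it takes a genuinely different route from the paper. The paper's proof is a citation argument: it identifies Edge-MTFR with the feedback edge/arc set problem on directed bipartite graphs via Theorem~\ref{HittingEdge} (as you do), and then invokes the result of Guo \emph{et al.} that feedback arc set is NP-complete on \emph{bipartite tournaments}, noting that bipartite tournaments are a special case of the directed bipartite instances arising here --- precisely the ``alternative'' you mention in your closing sentence. You instead give a self-contained reduction from feedback arc set on general digraphs by subdividing every arc $(u,v)$ into $u\to x_e\to v$, which makes the graph bipartite while placing the directed cycles of $G$ and $G'$ in bijection and preserving the optimum exactly. Your reduction is sound: each subdivision vertex has a unique in- and out-neighbour, so no cycles are created or merged, and the two maps on arc sets are size-non-increasing in the directions you need. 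What your approach buys is independence from the external citation and a completely elementary argument; what the paper's approach buys is brevity and a hardness result already tailored to a bipartite class. One small point worth a sentence if you write this up: the interdependency model assumes every node has at least one incoming edge, whereas sources and sinks of $G$ violate this in $G'$; since such vertices lie on no cycle, you can iteratively delete them (or observe they fail for free in the cascade) without changing the optimum, so the reduction still lands in a legitimate instance of the Edge-MTFR problem.
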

\begin{proof}
By Theorem \ref{HittingEdge}, minimum edge removals problem is the edge version of the hitting cycle problem which is exactly equivalent to the well-known problem of Feedback Edge Set (FES). Similar to FVS, FES finds the smallest set of edges whose removals make the graph acyclic, and it is known to be NP-complete for general graphs \cite{Garey}. Furthermore, Guo \textit{et al.} proved that FES is NP-complete for bipartite tournaments \cite{Guo}. Since finding FES in bipartite tournaments is a special case of the Edge-MTFR problem, the Edge-MTFR problem is also NP-complete.
\end{proof}

\subsection{Bidirectional Interdependency}\label{Bidir}
Under the bidirectional interdependency model, the edges between the two interdependent networks are bidirectional, i.e. if a power node $S_i$ provides power to router $R_j$, router $R_j$ \textit{must} provide control to power node $S_i$ (Figure \ref{Bi1}).

\begin{theorem}\label{VertexCover}
Finding the minimum node removals for total failure in star networks with bidirectional interdependency is solvable in polynomial time.
\end{theorem}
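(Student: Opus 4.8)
The plan is to reduce the Node-MTFR problem in the bidirectional model to minimum vertex cover on the underlying bipartite graph, and then invoke the classical polynomial-time solvability of bipartite vertex cover. First I would reuse the cycle-hitting characterization already established: since each bidirectional interdependency edge $\{S_i, R_j\}$ behaves as a pair of anti-parallel directed edges (power flows one way, control the other), Lemmas \ref{lemma1}--\ref{lemma3} and Theorem \ref{HittingCycle} carry over unchanged, so Node-MTFR still equals the minimum number of vertices that hit every cycle. The structural observation that distinguishes the bidirectional case from the unidirectional one is that every single edge $\{S_i, R_j\}$ now forms a length-two cycle $S_i \to R_j \to S_i$; this is precisely what collapses the hard Feedback Vertex Set instance into an easy one.

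Next I would establish both directions of the equivalence ``minimum cycle-hitting set $=$ minimum vertex cover.'' For one direction, any cycle-hitting set must in particular hit each of these two-cycles, and hitting the two-cycle on $\{S_i, R_j\}$ forces the removal of $S_i$ or $R_j$; hence the removed set has an endpoint of every edge and is a vertex cover. For the converse, every cycle of any length traverses at least one edge, so a set that removes an endpoint of every edge necessarily destroys every cycle. These two inclusions show that the optimal cycle-hitting set and a minimum vertex cover coincide, so Node-MTFR equals the size of a minimum vertex cover of the interdependency graph.

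Finally I would close with the polynomial-time claim. Because the two networks have star topologies and share no internal edges, the interdependency graph is bipartite; by K\"onig's theorem the minimum vertex cover of a bipartite graph equals its maximum matching, and the latter is computable in polynomial time by augmenting-path or max-flow methods (e.g.\ Hopcroft--Karp). I expect the main obstacle to be the structural step in the second paragraph, namely verifying that hitting all the length-two cycles is \emph{exactly} the vertex-cover condition and that no longer cycle can ever permit a strictly smaller removal set, rather than the concluding appeal to K\"onig's theorem, which is entirely standard once the reduction is in hand.
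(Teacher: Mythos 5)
Your proposal is correct and follows essentially the same route as the paper: reduce Node-MTFR to hitting the length-two cycles formed by each bidirectional edge, observe that this coincides with minimum vertex cover on the bipartite interdependency graph (since hitting the two-cycles suffices to hit all longer cycles), and conclude via K\"onig's theorem and maximum matching. You merely spell out both inclusions of the equivalence more explicitly than the paper does, which is a welcome bit of extra rigor but not a different argument.
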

\begin{proof}
It is easy to see from Figure \ref{Bi1} 
that edges are cycles of length two, and hitting cycles of length two guarantees hitting cycles of larger size. On the other hand, hitting at least one node in every cycle of size two is equivalent to finding the minimum vertex cover in bipartite graphs. By Konig's Theorem, finding the minimum vertex cover in bipartite graph is equivalent to maximum matching which is polynomially solvable \cite{Orlin}. Thus, finding the minimum node removals in star networks with bidirectional interdependency is polynomially solvable.
\end{proof}


For the Edge-MTFR, all of the edges must be removed. This is due to the fact that in the star networks with bidirectional interdependency every edge is a cycle, and cycles are robust components.

\subsection{Comparing the interdependency models}
We have seen that when networks have unidirectional interdependency, finding the optimal solution for the Node-MTFR problem is NP-complete; however, it can be solved in polynomial time when the networks have bidirectional interdependency. Here, we try to explain by way of an example why the analysis of unidirectional interdependency is more difficult than bidirectional interdependency. Figures \ref{Uni1} and \ref{Bi1} show two networks with the same topology under the different interdependency models. Suppose that in both networks, node $S_1$ is intentionally removed. It can be seen that the removal of $S_1$ in network \ref{Uni1} leads to the sequential failure of nodes $R_1$, $S_2$, $R_2$, $S_3$ and finally $R_3$. However, removal of $S_1$ in network \ref{Bi1} does not cause any failures, as all of the other nodes still have an incoming edge. Now, suppose that we want to cause the failure of node $R_2$ in network \ref{Bi1}. In this case, all the nodes attached to $R_2$ (nodes $S_1$ and $S_2$) must be removed. It can be seen that failure of $R_2$ cannot cause the failure of other nodes as it is already disconnected from the rest of the network. These observations indicate that in the case of unidirectional interdependency, a failure can cascade multiple times between the networks. However, in the case of bidirectional interdependency, a failure cascades only in one stage: either from the power grid to the CCN or from the CCN to the power grid. This makes the analysis of bidirectional interdependent networks more tractable.


Next we compare the robustness of the interdependency models. We use the random graphs generated in section \ref{Ap_Alg} to generate a new set of graphs with the same topology but bidirectional dependency. 
To compare the robustness of the two models, we find the optimal solution in the unidirectional graphs by solving the hitting set problem using CPLEX, and bidirectional graphs by solving the vertex cover problem. It can be seen from Figure \ref{dependency_compare} that for all values of $N$, networks with bidirectional dependency need more node removals; therefore, they are more robust to failures. This observation shows that the existence of more disjoint cycles and shorter cycles makes a network more robust.

\begin{figure}[ht]
\centering
\includegraphics[scale=0.5]{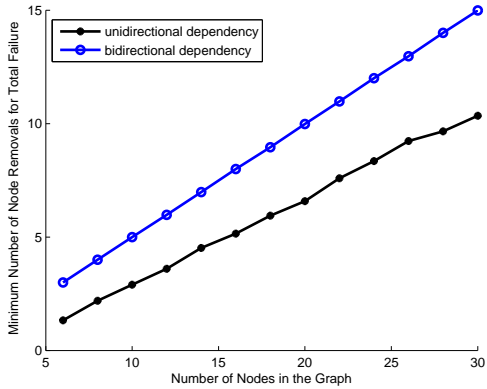}\vspace{-0.3cm}
\caption{Comparing the robustness of interdependency models}
\label{dependency_compare}\vspace{-0.2cm}
\end{figure}

\section{Discussion}\label{discussion}
We measured the Node-MTFR metric on the Italian network topology shown in \cite{Rosato}\footnote{The authors would like to thank professor Rosato for providing the data for the Italian power grid and communication networks.}. For the power grid, we only considered the substations that are directly connected to the generators. These substations are the most critical nodes as if they fail, all dependent substations will fail. Similarly, for the CCN, we only considered the routers that are directly connected to the control centers (we assumed that the control centers are located in the highly connected clusters of routers). Note that under this selection, the power grid and the CCN have star topologies. For the interdependency between the networks, we assumed that every substation receives its control signals from the nearest router, and every router receives power from the nearest substation. Figure \ref{Italy2} shows the power grid, the CCN and the interdependency between the two networks on the map of Italy.


Under the unidirectional interdependency model, we know that for a total failure, one should hit of all the cycles. It can be seen from Figure \ref{Italy2} that the cycles are very short, and mostly isolated. Therefore, one should remove a large number of nodes (13 nodes) simultaneously in order to cause the failure of the whole network. However, if we only consider the northwest of Italy, it can be seen that one third of the substations are controlled by a few routers. In fact, removing only three routers will lead to blackout in one third of the power grid. This shows that although the network might be robust to a total failure, only a few node removals can cause a considerable partial blackout. This observation leads to an important research direction: Measuring the effects of node removals on the failure of parts of the network.

\begin{figure}[ht]
\centering
\includegraphics[scale=0.20]{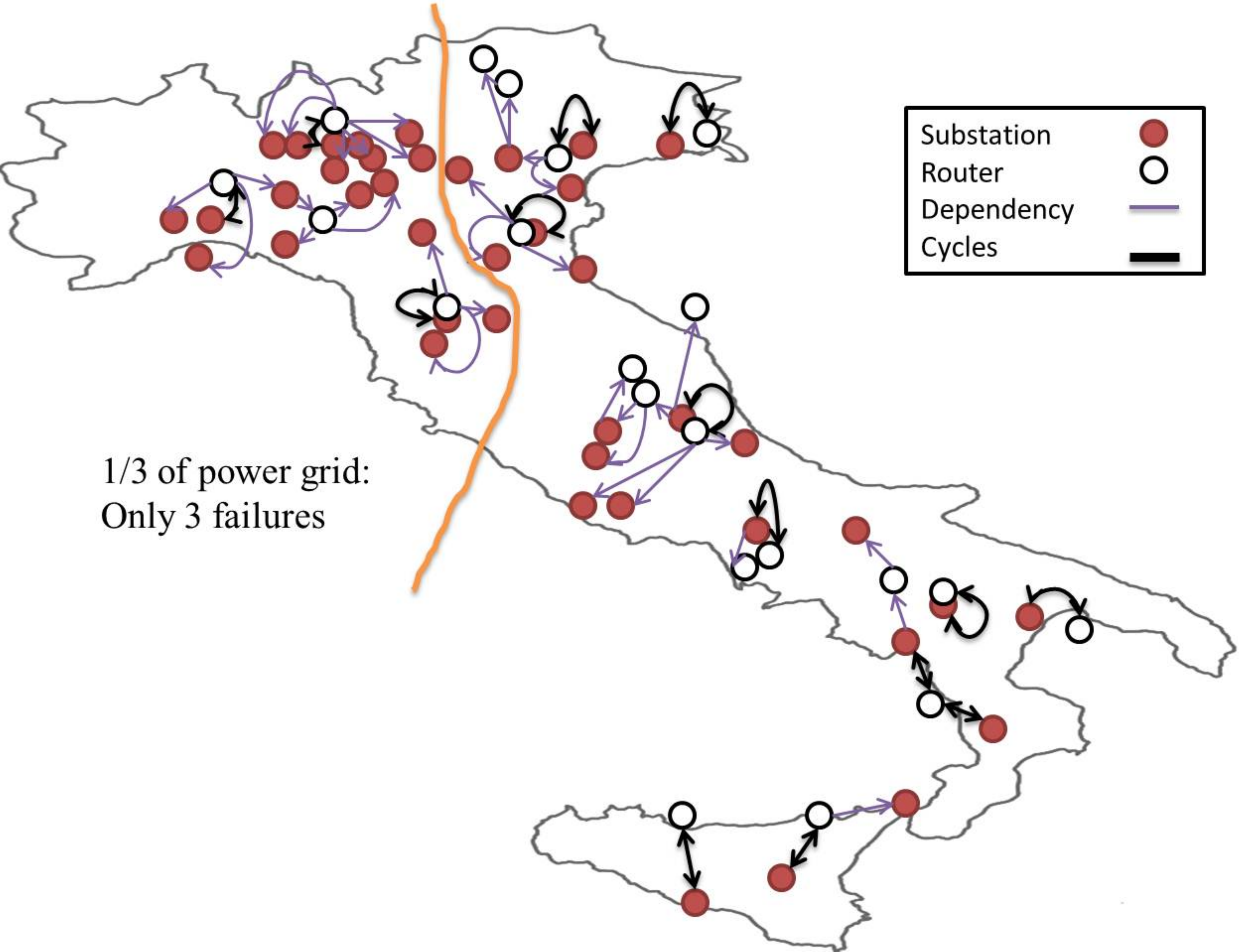}\vspace{-0.3cm}
\caption{Italian Power grid and CCN; Cycles shown in bold lines.}
\label{Italy2}\vspace{-0.2cm}
\end{figure}

\section{Conclusion}\label{conclusion}
We considered the problem of robustness in interdependent networks. Two networks A and B are said to be interdependent if the state of network A depends on the state of network B and vice versa. We considered a cyber-physical interdependency between the networks in the sense that the nodes in network A depend on information from network B, and the nodes in network B depend on the physical output of network A. The massive blackout in Italy in 2003 was the result of such interdependency between the power grid and the CCN where a small failure in the power grid cascaded between the two networks and led to extensive failures in both the power grid and the CCN. 

We studied the minimum number of nodes that should be removed from both networks so that all of the nodes in the networks fail after the ensuing cascades. We formulated this problem, and proved it is equivalent to the hitting cycles problem which is NP-complete in the case of unidirectional dependencies. We also presented polynomial algorithms which give suboptimal solutions. Moreover, for the case of bidirectional dependencies, we proved our problem is equivalent to the vertex cover problem in bipartite graphs which is solvable in polynomial time. We applied our results on the real network of Italy, and showed that it is robust to total failure. However, we showed that parts of the network are very vulnerable to failures.

\bibliographystyle{IEEEtran}
\bibliography{reference}

\end{document}